\documentclass[11pt]{article}
\usepackage{chao}
\usepackage{algo}
\usepackage{array}
\usepackage{booktabs}

\newcommand{\ind}{\mathcal{I}}
\newcommand{\F}{\mathcal{F}}
\newcommand{\cl}{\operatorname{cl}}
\newcommand{\abl}{\operatorname{abl}}
\newcommand{\one}{\mathbf{1}}
\newcommand{\gap}{\operatorname{gap}}
\newcommand{\supp}{\operatorname{supp}}

\title{Integrality-Gap Bounds for Weighted Matchoids and Matroid Intersection}
\author{Yu Cong\footnote{ \email{yucong143@gmail.com}, University of Electronic Science and Technology of China.} \and Yajie Zhao\footnote{ \email{yajiezhao8@gmail.com}, University of Electronic Science and Technology of China.}}
\date{}

\begin{document}
\maketitle
{\let\thefootnote\relax\footnotetext{\textbf{AI Disclosure.} We used GPT-5.6 Sol to assist with developing the integrality-gap proof and deriving the local-ratio algorithm. The tool materially affected Sections \ref{sec:gap-proof} and \ref{sec:algorithm}. The authors verified the correctness and originality of all content including references. }}

\begin{abstract}
The weighted $k$-matroid intersection problem asks for a maximum-weight set
that is independent in each of $k$ matroids on a common ground set.  The
natural LP relaxation optimizes over the intersection of the $k$ matroid
independent set polytopes.  It is conjectured that this LP has integrality gap
at most $k-1$.  The conjecture is known for $k\le3$, but for $k\ge4$ the best
general upper bound was $k$.  We improve this bound to
$k-1+1/k$.  More generally, we prove that the natural LP of a
$p$-matchoid has integrality gap at most $p-1+1/p$, with a deterministic
LP-relative algorithm attaining the same factor.  The matchoid extension
resolves the $p$-matchoid part of a conjecture of Lee, Sviridenko, and
Vondr\'ak; projective planes give explicit tight instances whenever one of
order $p-1$ exists.
\end{abstract}

\section{Introduction}

The \emph{weighted $k$-matroid intersection problem} asks for a
maximum-weight set that is independent in each of $k$ matroids on a common
finite ground set.  It lies at a sharp boundary in matroid optimization.  For
$k=2$, Edmonds' matroid-intersection theorem gives exact polynomial-time
optimization and an integral natural LP.  Already for $k=3$, however, even
the unweighted problem is APX-hard, the partition-matroid special case
contains three-dimensional matching, and the natural LP has integrality gap
exactly $2$
\cite{edmonds1979intersection,schrijver2003combinatorial,
      linhares2020approximate}.

For general $k$, the LP relaxation allows elements to be selected fractionally
while retaining all independent set polytope constraints of the input
matroids.  Its weighted integrality gap is the worst-case ratio between the
LP optimum and the maximum weight of a common independent set.  Unlike the
integral problem for $k\ge3$, this LP remains polynomial-time optimizable in
the standard matroid-oracle model
\cite{iwata2001combinatorial,grotschel1981ellipsoid,
      schrijver2003combinatorial}.  Its gap therefore measures how much of
the exact two-matroid polyhedral theory survives beyond the tractability
threshold. An upper bound of $g$ says that every fractional optimum can
lose at most a factor $g$ when replaced by a common independent set, whereas
a lower-bound instance rules out any better guarantee against this LP.  Since
intersections of partition matroids encode $k$-dimensional matching, the
question also extends fundamental fractional hypergraph-matching bounds to
general matroid constraints
\cite{furedi1993fractional,aharoni2025coloring}.

Aharoni and Berger proposed a matroidal extension of Ryser's conjecture with
factor $k-1$ \cite[Conjecture~7.1]{aharoni2006intersection}.  Its weighted
polyhedral analogue, formulated explicitly by Aharoni, Berger, Guo, and
Kotlar, asserts that the natural LP has integrality gap at most $k-1$
\cite[Conjecture~10.11]{aharoni2025coloring}.  For $k\ge4$, the best
known general upper bound was $k$.  Our main result gives the first
improvement on that bound.

\begin{theorem}\label{thm:main}
For every integer $k\ge2$, the natural LP for weighted $k$-matroid
intersection has integrality gap at most $k-1+1/k$.
\end{theorem}

The structural argument behind \autoref{thm:main} also yields an approximation algorithm with the same approximation factor.

\begin{corollary}\label{cor:algorithm-intro}
Given rational weights and independence or rank oracles for $M_1,\ldots,M_k$,
there is a deterministic polynomial-time $(k-1+1/k)$-approximation relative
to the natural LP.
\end{corollary}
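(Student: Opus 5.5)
The plan is to turn the structural argument behind \autoref{thm:main} into an algorithm by the standard local-ratio / iterative-relaxation template, run against an optimal LP solution. First, solve the natural LP $\max\{w^\top x : x\in\bigcap_i P(M_i)\}$ in polynomial time. Separation over each $P(M_i)$ reduces to submodular minimization (or Cunningham's algorithm) using the rank or independence oracle, and the ellipsoid method handles rational weights. Next, pass to an extreme point $x^*$, or to a basic solution of the LP restricted to its tight sets. By uncrossing, the tight constraints of each $M_i$ can be taken to form a chain, so $\supp(x^*)$ has controlled size relative to these $k$ chains.

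The core step is a counting lemma in the spirit of the integrality-gap proof. It should produce, in polynomial time, an element $e\in\supp(x^*)$ whose ``conflict neighbourhood'' $N(e)$ has small fractional mass. Here $N(e)$ consists of the elements that, in some $M_i$, lie in the span-related tight sets containing $e$. The bound to aim for is
\[
\sum_{f\in N(e)} x^*_f \;\le\; k-1+\tfrac1k .
\]
Since \autoref{thm:main} shows that such an element always exists, the algorithm only needs to locate one. It does so by checking every support element, with the tight chains computed from the oracles.

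Given such an $e$, apply local ratio. Define $w_1 = w(e)\cdot \one_{N(e)}$, i.e.\ the weight $w(e)$ placed on every element of $N(e)$. Then set $w_2 = w - w_1$, recurse on the elements with positive $w_2$, and afterwards add $e$ to the returned set if it remains independent in all $M_i$. Two facts give the factor. First, $w_1^\top x^* \le (k-1+1/k)\,w(e)$. Second, every maximal common independent set built this way contains an element of $N(e)$, so its $w_1$-weight is at least $w(e)$. Induction on the support size then yields $w(S)\ge w^\top x^*/(k-1+1/k)$. Each recursion deletes at least $e$, so the number of rounds is polynomial.

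The main obstacle is making the existence argument constructive while keeping $x^*$ feasible for the residual instance. The neighbourhood $N(e)$ must be defined so that two properties hold simultaneously: it must obey the $1/k$-improved mass bound, and it must satisfy the ``any augmentation intersects $N(e)$'' property. For the second property I would first try defining $N(e)$ per matroid as the fundamental circuit of $e$ with respect to the current partial solution, restricted to the support. The improvement from $k$ to $k-1+1/k$ must come from the averaging in the paper's gap proof, applied to the $x^*$-weighted choice of $e$. If a single element cannot be guaranteed, the fallback is to pick $e$ via that averaging, which can be computed exactly from $x^*$, and then derandomize by checking all candidates.
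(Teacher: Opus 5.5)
There is a genuine gap at your core step, and the justification you give for it does not hold. \autoref{thm:main} is the containment $P\subseteq\alpha Q$. It is proved by contradiction through global aggregation: Cauchy--Schwarz over the whole support against $|E|\le kX$, together with the closure inequality averaged over $\supp(z)$. It makes no pointwise statement, so it does not show that some $e$ has a neighbourhood $N(e)$ with $x^*(N(e))\le k-1+1/k$ and the hitting property.

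The difficulty is also not in locating $e$, which is all your averaging/derandomization fallback addresses. It is that such an $N(e)$ need not exist in the form you need. In a matroid, $S$ blocks $e$ because $e\in\cl_{M_i}(S)$, which is a property of the whole set $S$, not of one conflicting element. Defining $N(e)$ through the fundamental circuit of $e$ with respect to the current partial solution is circular: in local ratio the layer $w_1$ must be fixed before the recursive call produces $S$. A fixed set $N\ni e$ that meets every $M_i$-independent set spanning $e$ must contain a cocircuit of $M_i$ through $e$; otherwise $e\in\cl_{M_i}(E\setminus N)$, and a basis of $E\setminus N$ spans $e$ while missing $N$. Such cocircuits can carry $x^*$-mass above $1$. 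In $U_{2,4}$ with $x\equiv1/2$ every cocircuit has mass $3/2$, and in $U_{r,2r}$ with $x\equiv1/2$ the mass is $(r+1)/2$. So the per-constraint mass-at-most-$1$ accounting that makes closed neighbourhoods work in hypergraph matching is not available.

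The paper avoids single-element layers altogether. \autoref{lem:layer} uses the full-support fractional layer $d_e=k-(k-1)x_e$ at an extreme optimum $x$ with $X=x(A)$. It has two properties:
\begin{itemize}
\item Extremality and \autoref{lem:chain} give $|\supp(x)|\le kX$, so Cauchy--Schwarz yields $d\cdot x=kX-(k-1)\sum_e x_e^2\le\alpha X$.
\item For \emph{every} maximal common independent set $I$, the closures $\cl_{M_i}(I)$ cover the ground set and satisfy $x(\cl_{M_i}(I))\le|I|$. Hence $X+(k-1)x(I)\le k|I|$, which is $d(I)\ge X$.
\end{itemize}
This aggregate certificate replaces your hitting property and needs no conflict set at all. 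In fact $d_e$ is exactly the bound $\sum_i x(C_i^*)-(k-1)x_e$ you would obtain if every $M_i$ had a mass-$1$ cocircuit $C_i^*$ through $e$, so the paper's layer is the averaged form of your neighbourhood idea. The algorithm then subtracts $\lambda d$ with $\lambda=\min_e c_e/d_e$, recurses on the positive residual support, and greedily extends the returned set to a maximal one. The local-ratio induction gives $c(I)\ge\operatorname{LP}_A(c)/\alpha$.
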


The proof in fact depends on the number of matroid constraints containing
each element, rather than on the total number of matroids.  A
\emph{$p$-matchoid} is specified by matroids on possibly overlapping local
ground sets, with each element occurring in at most $p$ of them
\cite{huang2023matchoid}.  We obtain the following extension, proved in
\autoref{sec:matchoids}.

\begin{theorem}\label{thm:matchoid}
For every integer $p\ge2$, the natural LP of a $p$-matchoid has integrality
gap at most $p-1+1/p$.
\end{theorem}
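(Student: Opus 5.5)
The plan is a local-ratio argument driven by an extreme point of the natural LP. Let $x^*$ be an optimal vertex of
\[
P=\{x\ge 0: x(S\cap N_i)\le r_i(S\cap N_i)\ \text{for all } i \text{ and } S\},
\]
where $M_i$ ranges over the local matroids on ground sets $N_i$. Every element lies in at most $p$ of the $N_i$. Elements with $x^*_e=0$ are deleted. Elements with $x^*_e=1$ can be handled by contraction, which preserves the $p$-matchoid structure.

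\textbf{Step 1: sparsity.} The first step is a sparsity statement for the support. For each $i$, uncross the tight sets of $M_i$ into a chain $\mathcal C_i$ (standard for matroid polytopes). By linear algebra, $x^*$ is then determined by $\bigcup_i \mathcal C_i$, so $|\supp x^*|\le\sum_i|\mathcal C_i|$.

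Summing the local counting over all elements, I would aim to show that some element $e$ in the support satisfies a \emph{local conflict bound} of the form
\[
\sum_{f\in C(e)} x^*_f \;\le\; p-1+\tfrac1p ,
\]
where $C(e)$ is a conflict set of $e$. Concretely, for each $i$ with $e\in N_i$, take the elements $f$ in the smallest tight chain set of $\mathcal C_i$ containing $e$. Removing $e$ should leave an exchange structure in which adding $e$ to a solution forces deleting at most one element per matroid from such a set.

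The target inequality in the cleanest form is
\[
\sum_{i\ni e}\bigl(x^*(T_i(e))-x^*_e\bigr)+x^*_e\le p-1+\tfrac1p .
\]
Each chain set has integral $x^*$-value $r_i(T)$. The extra $1/p$ (instead of the trivial $p$) should come from a token/averaging argument. Give each element $e$ with $x^*_e\in(0,1)$ one token, split among the chain sets it belongs to. If every element had conflict load exceeding $p-1+1/p$, the total load would exceed the number of chain constraints, contradicting the rank count.

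\textbf{Step 2: local-ratio recursion.} The second step is the standard local-ratio recursion. Pick $e$ as in Step 1. Define weights $w_1$ supported on $C(e)\cup\{e\}$ with $w_1(f)=w(e)$, and set $w_2=w-w_1$.

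Recurse on $w_2$ after removing $e$ (and any elements of nonpositive $w_2$). The restriction of $x^*$ stays feasible for the smaller instance, so induction gives a solution $S'$ with
\[
w_2(S')\ge w_2\cdot x^*/(p-1+1/p).
\]
Then add $e$ to $S'$ by deleting at most one element per matroid, chosen from $C(e)$ using the matroid exchange property locally in each $M_i$. The result is $S$ with $w_2(S)\ge w_2(S')$ and $w_1(S)\ge w(e)$.

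Since
\[
w_1\cdot x^*\le w(e)\,(p-1+1/p)
\]
by Step 1, the two pieces combine to the claimed ratio.

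\textbf{Main obstacle and tightness.} The main obstacle is Step 1: proving that some support element has fractional conflict load at most $p-1+1/p$, rather than $p$. A naive count gives only $p$: each chain set meeting $e$ contributes at most $1$ beyond $e$, and $x^*_e$ contributes $1$. The improvement must exploit that, when all loads are near $p$, the counting forces too many tight constraints relative to support size.

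I would first try to prove the averaging inequality
\[
\sum_{e\in\supp}x^*_e\cdot\mathrm{load}(e)\le\Bigl(p-1+\tfrac1p\Bigr)\sum_e x^*_e,
\]
by double counting over pairs $(e,f)$ sharing a chain set, weighted by $x^*_e x^*_f$. The key ingredient is that within a tight set $T$,
\[
\sum_{e,f\in T}x^*_ex^*_f=r(T)^2 .
\]
Combining this with $|\supp|\le\sum|\mathcal C_i|$ via Cauchy--Schwarz should give the $1/p$ term. Projective planes of order $p-1$ should be exactly the equality case of this Cauchy--Schwarz step, which serves as a consistency check.
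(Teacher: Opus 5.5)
\textbf{There is a genuine gap: Step~1's load bound is false once a local matroid has rank at least two, and Step~2's exchange claim fails for general matroids.}

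Your density step is correct and matches the paper. Tight sets uncross into a chain in each local matroid, and each chain block has positive integral $x^*$-value. So the chain in $M_i$ has length at most $x^*(N_i)$, which gives $|\operatorname{supp} x^*|\le pX$. Your Cauchy--Schwarz step with projective planes as the equality case is also right. The problem is the single-element layer built on top of it.

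\emph{Step~1 fails.} Take the Fano plane and let the elements be its seven lines. Give each point $v$ the uniform matroid $U_{2,3}$ on the three lines through $v$. This is $3$-uniform hypergraph $b$-matching with $b\equiv 2$, so $p=3$ and $p-1+1/p=7/3$. The point-line incidence matrix $N$ satisfies $NN^{\top}=2I+J$, which is nonsingular. Hence $x^*\equiv 2/3$ is the unique solution of the seven tight star constraints, and it is an optimal vertex for unit weights with all coordinates in $(0,1)$. The only tight set of each local matroid is the whole star, so $T_i(e)$ is the star. Every element then has load $3(2-2/3)+2/3=14/3>7/3$. Your averaged inequality $\sum_e x^*_e\,\mathrm{load}(e)\le(p-1+1/p)X$ fails by the same numbers, so no choice of $e$ and no token scheme can rescue Step~1.

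\emph{Step~2 fails for general matroids.} Tight sets of a strictly positive $x^*$ are flats, and a circuit through $e$ may meet the flat $T_i(e)$ only in $e$. For example, take a rank-$3$ matroid with $3$-point lines $\{e,f,g\}$ and $\{e,a,b\}$, where $\{e,f,g\}$ is tight. Then $S'=\{a,b\}$ blocks $e$ in $M_i$, but the forced deletion ($a$ or $b$) lies outside $T_i(e)\setminus\{e\}$. So a blocking solution need not meet your conflict set. Separately, even when the exchanged element does lie in $C(e)$, deleting it from $S'$ can lose positive $w_2$-weight, so $w_2(S)\ge w_2(S')$ does not follow.

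\textbf{The missing idea is to make the layer global rather than attached to one element.} After padding so that every element lies in exactly $p$ local ground sets, the paper uses $d_e=p-(p-1)x^*_e$ on the whole support.
\begin{itemize}
\item Your density bound and Cauchy--Schwarz give $\sum_e (x^*_e)^2\ge X/p$, hence $d\cdot x^*\le (p-1+1/p)X$.
\item The conflict-set and exchange step is replaced by a closure count. For any inclusionwise maximal feasible $I$, the local closures $\cl_{M_j}(I\cap E_j)$ cover the ground set. Each closure has $x^*$-mass at most $|I\cap E_j|$, and each element of $I$ lies in all $p$ of its closures. This gives $X+(p-1)x^*(I)\le p|I|$, which is exactly $d(I)\ge X$.
\end{itemize}
Local ratio with layer $\lambda d$ then only needs to extend the recursive solution greedily to a maximal feasible set, with no deletions. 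The paper's non-algorithmic proof applies the same two inequalities to the sets in the support of an optimal cover, inside a minimal counterexample with an extreme primal--dual witness.

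In the rank-one (hypergraph matching) case your load is at most $d_e$, so a light element exists and your element-by-element argument is the classical one. It does not extend beyond rank one.
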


\begin{corollary}\label{cor:matchoid-algorithm}
Given rational weights, the local ground sets, and independence or rank
oracles for their matroids, there is a deterministic polynomial-time
$(p-1+1/p)$-approximation relative to the natural $p$-matchoid LP.
\end{corollary}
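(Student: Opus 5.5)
The corollary follows by making the structural argument behind \autoref{thm:matchoid} algorithmic through the local-ratio framework. First, solve the natural $p$-matchoid LP
\[
\max\{w^\top x : x\ge 0,\ x(S\cap N_i)\le r_i(S\cap N_i)\ \forall i,\ \forall S\}
\]
in polynomial time. Each constraint family is a matroid independent-set polytope on its local ground set $N_i$, and separation over such a polytope is polynomial given an independence or rank oracle, for instance by Cunningham's algorithm or via submodular minimization. The ellipsoid method therefore yields an optimal $x^*$, which we may take to be a vertex. Since the weights are rational, the encoding lengths stay polynomial.

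Next, carry out the local-ratio scheme that mirrors the extreme-point argument used for \autoref{thm:matchoid}. The key structural fact to extract from that proof is that every extreme point $x^*$ with no zero coordinates has an element $e$ whose ``conflict neighbourhood'' $N[e]$ is light. Concretely, for each local matroid $M_i$ containing $e$, take the elements $f$ that would have to leave a set when $e$ enters it in $M_i$ (for example, via a tight chain or spanning set). The claim is that $x^*(N[e])$, counted with multiplicity, is at most $p-1+1/p$. Given such an $e$, decompose the weights as
\[
w = w_1 + w_2,\qquad w_1 = w(e)\,\mathbf{1}_{N[e]},
\]
and recurse on $w_2$ after deleting the elements with nonpositive residual weight. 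On unwinding, insert $e$ greedily and repair every matroid $M_i$ by dropping at most one conflicting element per constraint. This produces a set that is independent in every $M_i$, and its weight relative to $w_1$ is at least $w(e)$. Since
\[
w_1^\top x^* \le (p-1+1/p)\,w(e),
\]
the local-ratio lemma gives the factor $p-1+1/p$ relative to $w^\top x^*$.

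For the polynomial bound, each recursion step zeroes out the weight of at least $e$, so there are at most $n$ steps. Each step needs one LP solve (or we reuse $x^*$ restricted to the support), a search over $n$ candidate elements, and oracle-based computation of the tight sets. Tight sets can be found by minimizing $r_i(S)-x^*(S)$, which is again submodular minimization. Determinism follows because we use the ellipsoid method and combinatorial submodular minimization rather than randomized rounding.

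The main obstacle is the second step. The existence of a light element is established non-constructively in the gap proof, for example by averaging over the support. We must make that averaging constructive, choosing an explicit $e$ and an explicit exchange structure $N[e]$ that can be computed from the oracles, and we must ensure the bound survives when we reuse the same $x^*$ after restricting to the positive-residual support. If the gap proof averages a quantity like $\sum_e x^*_e\,x^*(N[e])$, the plan is to compute this quantity for every $e$ and pick the minimizer, which is guaranteed to be at most the average.
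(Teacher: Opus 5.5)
There is a genuine gap in your second step. Your LP-solving and running-time arguments are fine. The approximation step, however, rests on a lemma the paper never proves, and that lemma is false beyond rank-one local matroids. The gap proof of Theorem~\ref{thm:matchoid} is a minimal-counterexample primal--dual argument; it produces no light element, so there is no averaging there to make constructive.

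For $w_1=w(e)\mathbf{1}_{N[e]}$ to work you need $e\in N[e]$, and you need every feasible $S$ with $S\cup\{e\}$ infeasible to meet $N[e]$. Elements outside $N[e]$ keep their full weight, so nothing stops the recursion from returning such an $S$. For hypergraph matching the classical fractional local-ratio argument does succeed: $x(N[e])\le\sum_{v\in e}x(\delta(v))-(p-1)x_e\le p-(p-1)x_e$, and averaging with weights $x_e$ yields a light $e$.

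Now take $p=3$, let the elements be the seven lines of the Fano plane, and give each point the uniform matroid $U_{2,3}$ on the three lines through it. Then $x=\frac23\mathbf{1}$ is a vertex with $0<x<1$, since all seven point constraints are tight and the incidence matrix is nonsingular. For a line $e$ and a point $v\in e$, the two other lines through $v$ form a feasible set that blocks $e$. So $N[e]$ must contain $e$ and one further line through each of the three points of $e$, and these lines are distinct. Hence $x(N[e])\ge\frac83>\frac73=\alpha_3$ for every $e$, and no light element exists.

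There are two further defects. First, dropping conflicting elements when you insert $e$ breaks the induction for $w_2$, because the dropped elements can carry positive residual weight. Second, reusing $x^*$ on the residual support is not justified: the density bound needs an extreme point, and the restriction of a vertex need not be one. The paper re-solves the LP at every call.

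The missing idea is to replace the single-element neighbourhood by a layer on the whole current ground set.
\begin{itemize}
\item For an extreme optimum $x$ on $A$ with $X=x(A)$, put $d_e=p-(p-1)x_e$.
\item After padding to exact frequency $p$, Lemma~\ref{lem:chain} gives $|\supp(x)|\le pX$. Cauchy--Schwarz then gives $d\cdot x\le\alpha_pX$.
\item For every \emph{maximal} feasible $I\subseteq A$, the local closures $\cl_{M_j}(I\cap E_j)$ cover $A$ and contain each element of $I$ exactly $p$ times. So $X+(p-1)x(I)\le p|I|$, i.e.\ $d(I)\ge X$.
\item Subtract $\lambda d$ with $\lambda=\min_e c_e/d_e$, recurse on the positive residual support, and greedily extend the returned set to a maximal one.
\end{itemize}
This gives the factor $\alpha_p$ with no element to choose (Lemma~\ref{lem:matchoid-layer} together with the induction of Lemma~\ref{lem:local-ratio-guarantee}).
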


Lee, Sviridenko, and Vondr\'ak conjectured that the relevant LP has gap
exactly $p-1+1/p$ for weighted $p$-matchoids
\cite[Conjecture~1]{lee2013matching}.  Their LP is stated as an extended
formulation obtained from a reduction to matroid $p$-parity.  In
\autoref{sec:matchoids} we describe their formulation and prove that its
projection is precisely our natural matchoid LP.  Since hypergraph matching
already supplies the matching lower bound \cite{chan2012hypergraph},
\autoref{thm:matchoid} resolves the $p$-matchoid part of their conjecture.
Their separate prediction of a $p-1$ gap for the intersection of $p$ global
matroids remains open; \autoref{thm:main} makes progress toward it.

\paragraph*{Previous results.}
The previous results, summarized in \autoref{tab:previous-results}, should
be read along two dimensions.  The third column concerns the integrality gap
of the natural LP, while the fourth records approximation algorithms.  An
LP-relative approximation compares its output directly with the natural LP
optimum and therefore proves the same upper bound on the integrality gap.  By
contrast, an algorithm analyzed only against the integral optimum may have a
better approximation factor without giving any information about the natural
LP.

For two arbitrary matroids, Edmonds' polyhedral theorem makes the natural LP
integral \cite{edmonds2003submodular,schrijver2003combinatorial}, while his
weighted matroid-intersection algorithm optimizes exactly
\cite{edmonds1979intersection}.  The first nontrivial weighted case is three
matroids.  Linhares, Olver, Swamy, and Zenklusen \cite{linhares2020approximate} proved that the natural LP
has gap exactly $2$. Their
iterative-refinement algorithm is a weighted LP-relative $2$-approximation.  This integrality-gap bound is tight.  It
remains open whether weighted $3$-matroid intersection admits an approximation
factor below $2$ \cite{singer2026ordered}.

The unweighted three-matroid case illustrates this distinction sharply.  The
natural LP still has gap $2$: the general unweighted upper bound of Lau,
Ravi, and Singh specializes to $2$ \cite{lau2011iterative}, and the
three-partite matching examples give the matching lower bound
\cite{furedi1981maximum}.  Nevertheless, Lee, Sviridenko, and Vondr\'ak gave
a $(3/2+\e)$-approximation for maximum-cardinality three-matroid
intersection \cite{lee2013matching}.  This stronger factor is not LP-relative.

For arbitrary $k$ and arbitrary matroids, the previous weighted LP-gap bounds
were $k-1\le \gap\le k$.  The upper bound is furnished by the LP-relative
greedy analysis \cite{korte1978greedy,linhares2020approximate}.  The lower
bound already occurs for weighted partition matroids: their natural LP has gap
$k-1$ \cite{furedi1993fractional,chan2012hypergraph}, with LP-relative
$(k-1)$-approximations
\cite{chan2012hypergraph,parekh2015generalized}.  In contrast, for the
unweighted objective, Lau, Ravi, and Singh proved the stronger LP-relative
upper bound $k-1$ \cite{lau2011iterative}; truncated projective planes attain
it whenever $k-1$ is a prime power \cite{furedi1981maximum}.
For weighted arbitrary-matroid intersection, Lee,
Sviridenko, and Vondr\'ak obtained factor $k-1+\e$
\cite{lee2010submodular}; Singer and Thiery later obtained the randomized
factor $(k+1)/(2\ln2)$ \cite{singer2025better}, and Feldman and Ward obtained
the randomized factor $(k+1)\ln2+O(\e)$
\cite{feldman2026multiplicative,feldman2026multiplicativefull}.  Most recently,
Singer and Thiery obtained factor $k/2+o(k)$, even for monotone submodular
objectives \cite{singer2026ordered}.  For the unweighted problem, Lee,
Sviridenko, and Vondr\'ak obtained $k/2+\e$
\cite{lee2013matching}.  A linear dependence on $k$ is unavoidable.  The
$k/(12+\e)$-hardness of Lee, Svensson, and Thiery
\cite{lee2025hardness} was strengthened by Minzer and Zheng to
$k/(8+\e)$ for sufficiently large $k$, unless
$\mathsf{NP}\subseteq\mathsf{BPP}$ \cite{minzer2026near}.

\begin{table}[htbp]
\centering
\small
\begin{tabular}{@{}
  >{\centering\arraybackslash}p{0.15\textwidth}
  >{\centering\arraybackslash}p{0.15\textwidth}
  >{\raggedright\arraybackslash}p{0.25\textwidth}
  >{\raggedright\arraybackslash}p{0.35\textwidth}@{}}
\toprule
\textbf{Number of matroids} & \textbf{Objective}
  & \textbf{LP integrality gap}
  & \textbf{Approximation guarantees} \\
\midrule
2
  & Weighted
  & $1$ \cite{edmonds2003submodular,schrijver2003combinatorial}
  & $1$ (exact) \cite{edmonds1979intersection} \\

3
  & Weighted
  & $2$ \cite{linhares2020approximate}
  & \emph{LP:} $2$ \cite{linhares2020approximate} \\

3
  & Unweighted
  & $2$ \cite{lau2011iterative,furedi1981maximum}
  & \emph{OPT:} $3/2+\e$ \cite{lee2013matching} \\

$k\ge3$
  & Unweighted
  & $k-1$ \cite{lau2011iterative,furedi1981maximum}
  & \emph{OPT:} $k/2+\e$
    \cite{lee2013matching} \\

\addlinespace[5pt]

$k\ge4$
  & Weighted
  & $\le k$ \cite{korte1978greedy,linhares2020approximate};\newline
       \textbf{$\le k-1+1/k$ (this work)}
  & \emph{OPT:} $k-1+\e$
    \cite{lee2010submodular};\newline
    $(k+1)/(2\ln2)$ randomized
    \cite{singer2025better};\newline
    $(k+1)\ln2+O(\e)$ randomized
    \cite{feldman2026multiplicative,feldman2026multiplicativefull};\newline
    $k/2+o(k)$ \cite{singer2026ordered}\\

\bottomrule
\end{tabular}
\caption{Previous bounds for $k$-matroid intersection.  ``LP'' denotes a
factor relative to the natural LP optimum, while ``OPT'' denotes a factor
relative only to the integral optimum.  Weighted results also apply to
unweighted objectives; separate unweighted rows record stronger
cardinality-specific guarantees.  Smaller approximation factors are better.}
\label{tab:previous-results}
\end{table}

The local-search (or local-search-based) algorithms of Lee, Sviridenko, and
Vondr\'ak \cite{lee2010submodular,lee2013matching} and of Singer and Thiery
\cite{singer2025better,singer2026ordered} displayed in the table run in time
polynomial in the explicit instance size for every fixed $k$ and fixed
accuracy parameters (in particular, fixed $\e$ for the ``$+\e$'' entries);
their neighborhood searches have $k$-dependent complexity.
In contrast, the Feldman--Ward algorithm
\cite{feldman2026multiplicative,feldman2026multiplicativefull} uses
constant-size exchanges and runs in $\operatorname{poly}(|E|,\e^{-1})$
oracle time with no $k$-dependent exponent.
The algorithm of this work is deterministic weakly polynomial in
the standard oracle model: for rational weights, its running time is
polynomial in $|E|$, $k$, the local-oracle input size, and the encoding
length of the weights.

Thus, before this work no weighted LP-relative factor below $k$ was known for
arbitrary matroids when $k\ge4$.  \autoref{thm:main} brings the general
weighted integrality-gap bound to within $1/k$ of the conjectured value $k-1$.

The intersection of partition matroids is precisely a capacitated
multipartite hypergraph-matching problem.  To see the unit-capacity case,
start with a $k$-partite $k$-uniform hypergraph and introduce one partition
matroid for each vertex class.  Its blocks group the hyperedges incident to
the same vertex.  The common independent sets are exactly the matchings, and
the natural matroid-intersection LP becomes the standard fractional matching
LP.  Allowing arbitrary block capacities gives $k$-partite hypergraph
$b$-matching \cite{chan2012hypergraph,parekh2015generalized}.

This correspondence explains the connection with Ryser's conjecture, which
asserts that the minimum vertex-cover size of a $k$-partite $k$-uniform
hypergraph is at most $k-1$ times its maximum matching size
\cite{aharoni2025coloring}.  LP duality identifies the fractional matching
and fractional vertex-cover optima.  F\"uredi proved the corresponding
fractional Ryser bound, and F\"uredi, Kahn, and Seymour proved its weighted
extension \cite{furedi1981maximum,furedi1993fractional}.  Thus, the $k-1$
integrality-gap conjecture extends the fractional weighted Ryser inequality
from partition matroids to arbitrary matroids.  It does not imply the full
Ryser conjecture, which concerns integral vertex covers.

For comparison, F\"uredi, Kahn, and Seymour proved the upper bound
$k-1+1/k$ for the standard matching LP of a general $k$-uniform hypergraph;
projective planes attain it whenever $k-1$ is a prime power
\cite{furedi1993fractional,chan2012hypergraph}.  The matchoid extension in
\autoref{thm:matchoid} explains the agreement: intersections of $k$ global
matroids and general $k$-uniform hypergraph matching are both special cases
of $k$-matchoids.  The former uses $k$ matroids on the common ground set,
whereas the latter uses one local rank-one matroid at each hypergraph vertex.
For the hypergraph special case, the local-ratio coefficient used below is
the coefficient of Anegg, Angelidakis, and Zenklusen for nonuniform
hypergraph matching and $b$-matching \cite{anegg2021simpler}.  More broadly,
other choices of matroids model such settings as matroid-constrained spanning
trees and matroid-intersection covers
\cite{linhares2020approximate,im2021cover}.

\paragraph*{Organization.}
Section \ref{sec:prelim} introduces the natural LP and the common independent
set polytope.  Section \ref{sec:gap-proof} proves the integrality-gap bound.
Section \ref{sec:algorithm} develops the fractional local-ratio approximation
algorithm.  Section \ref{sec:matchoids} extends both results to
$p$-matchoids and gives matching lower-bound examples.

\section{Preliminaries}
\label{sec:prelim}

For a vector $v\in\R^E$ and $S\subseteq E$, write
$v(S):=\sum_{e\in S}v_e$.  The incidence vector of $S\subseteq E$ is
$\one_S\in\{0,1\}^E$.
We follow \cite{oxley2011matroid} for notation in matroid theory.
For a matroid $M=(E,\ind)$, its independent
set polytope is
\[
 P(M):=\conv\{\one_I:I\in\ind(M)\}
 =\{y\in\R_{\ge0}^{E}:y(S)\le r_M(S)\text{ for every }S\subseteq E\}.
\]
Then the weighted $k$-matroid intersection problem can be formulated as the
integer program
\begin{equation*}
\begin{aligned}
\max& & w\cdot x& \\
s.t.& & x&\in P(M_i) &  &\forall i\in [k]\\
    & & x&\in \set{0,1}^E
\end{aligned}
\end{equation*}
Thus the LP relaxation feasible region is
$P:=\bigcap_{i=1}^kP(M_i)$.  The family of sets independent in all $k$
matroids and its common independent set polytope are
\[
 \F:=\bigcap_{i=1}^k\ind_i
   =\{I\subseteq E:I\in\ind_i\text{ for every }i\},
\]
and $Q:=\conv\{\one_I:I\in\F\}$, respectively.
We use the standard closure $\cl_M(S):=\{e\in E:r(S\cup\{e\})=r(S)\}$.
Matroids are given in the rank oracle model.

\section{The Integrality-Gap Bound}
\label{sec:gap-proof}

We prove \autoref{thm:main} through an equivalent polyhedral containment.
The family $\F$ is downward closed, and so are $P$ and $Q$.  It suffices to
prove $P\subseteq\alpha Q$ for $\alpha:=k-1+\frac1k$.

To test the containment, we measure the smallest dilation of $Q$ containing
a point $x\in\R_{\ge0}^E$.  Define $\gamma_Q(x):=\inf\{t\ge0:x\in tQ\}$.
The convex-hull description of $Q$, together with downward closedness, turns
this quantity into the covering LP
\begin{equation}
 \gamma_Q(x)=\min\left\{\sum_{I\in\F}z_I:
 z_I\ge0, \sum_{I\ni e}z_I\ge x_e\; \forall e\in E\right\}.
 \label{eq:gauge}
\end{equation}
Indeed, if $x\in tQ$ with $t>0$, write $x/t$ as a convex combination of
incidence vectors of members of $\F$ and set $z_I$ equal to $t$ times the
corresponding coefficients.  This gives a feasible cover of total mass
$t$.  Conversely, a cover of total mass $t>0$ gives a convex combination
$y\in Q$ with $y\ge x/t$ coordinatewise; downward closedness gives
$x/t\in Q$, and hence $x\in tQ$.  The case $x=0$ is immediate.

Dualizing the covering LP gives
\begin{equation}
 \gamma_Q(x)=\max\{q\cdot x:q\in\R_{\ge0}^E,
                         \ q(I)\le1\; \forall I\in\F\}.
 \label{eq:antiblocker}
\end{equation}
Its feasible region is the anti-blocker
$\abl(Q)=\{q\ge0:q\cdot y\le1\text{ for all }y\in Q\}$, since it suffices
to impose the inequality on the incidence vectors generating $Q$
\cite{fulkerson1971blocking}.  Thus LP
strong duality identifies \eqref{eq:gauge} with this dual maximum.  Since
$x\in\alpha Q$ exactly when $\gamma_Q(x)\le\alpha$, it remains to prove
$\gamma_Q(x)\le\alpha$ for all $x\in P$.

We will rule out a violation by taking an extreme primal--dual witness on a
counterexample with the smallest possible ground set.  Deletion and
contraction will first force every coordinate of the witness to be strict.
The tight rank constraints at the extreme point will then give a density
estimate, while complementary slackness will produce common independent
sets whose closures cover the ground set.  The density and closure estimates
together will contradict the value of the witness.

\subsection{Minimal Obstructions Have Strict Coordinates}
\label{sec:minimal}

Suppose for contradiction that $\gamma_Q(x)>\alpha$ for some $x\in P$, and choose a bad
instance with $|E|$ minimum.  An element that is a loop of one of the
matroids has coordinate zero throughout $P$ and belongs to no member of
$\F$, so it can be deleted.  We may therefore assume that every singleton
belongs to $\F$.  In particular, $\abl(Q)\subseteq[0,1]^E$ and is compact.

Choose $(x,q)\in P\times\abl(Q)$ globally maximizing $q\cdot x$.  Among
the maximizers of $q\cdot x$ in $P$, choose $x$ to be an extreme point.
Set $\tau:=q\cdot x>\alpha$.
The global choice also makes $q$ optimal in \eqref{eq:antiblocker} for
this $x$.

\begin{lemma}\label{lem:boundary}
Every coordinate of the chosen pair satisfies $q_e>0$ and $0<x_e<1$.
\end{lemma}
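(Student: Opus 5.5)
Each of the three conditions will come from the minimality of $|E|$. The template is the same throughout: if a coordinate violates the condition, build a smaller instance on $E-e$ that is still bad, which contradicts the choice of $E$. Two facts will be used repeatedly. First, deletion and contraction of matroids preserve the structure of a $k$-matroid intersection. Second, the value $\tau=q\cdot x$ is the maximum of $q'\cdot x'$ over all pairs, so it suffices to exhibit a feasible pair on the smaller instance with value at least $\tau>\alpha$.

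\emph{Case $q_e=0$ or $x_e=0$.} We delete $e$ from every $M_i$.
\begin{itemize}
\item The restriction $x|_{E-e}$ lies in $\bigcap_i P(M_i\setminus e)$, because the rank constraints of $M_i\setminus e$ are exactly the constraints of $P(M_i)$ on subsets avoiding $e$.
\item The common independent sets of the deletions are exactly the members of $\F$ avoiding $e$. Hence $q|_{E-e}$ is still in the anti-blocker.
\item The product $q\cdot x$ loses only the term $q_ex_e=0$.
\end{itemize}
So by \eqref{eq:antiblocker} the smaller instance has $\gamma\ge\tau>\alpha$, which contradicts minimality. Therefore $q_e>0$ and $x_e>0$.

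\emph{Case $x_e=1$.} We contract $e$ in every matroid. This is legitimate because $e$ is not a loop, by the earlier reduction.
\begin{itemize}
\item \emph{Primal side.} We take $x'=x|_{E-e}$. For $S\subseteq E-e$ we have $x(S+e)\le r_i(S+e)$, and therefore $x'(S)\le r_i(S+e)-1=r_{M_i/e}(S)$. So $x'$ is feasible for the contracted LP.
\item \emph{Dual side.} Every $I'$ common independent in the contractions gives $I'+e\in\F$, so $q(I')\le 1-q_e$. Now set $q'=q|_{E-e}/(1-q_e)$. Singletons are in $\F$ and $q_e>0$, so $q_e\le1$; if $q_e=1$, then $q$ vanishes on every $f$ independent with $e$ in all matroids, and we handle that degenerate subcase separately.
\item \emph{Value.} When $q_e<1$ the new value is $q'\cdot x'=(\tau-q_e)/(1-q_e)$.
\end{itemize}
This is at least $\tau$ exactly when $\tau\ge1$, which holds since $\alpha\ge1$. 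So the contracted instance is bad with a smaller ground set, a contradiction.

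\emph{Main obstacle: the subcase $q_e=1$ with $x_e=1$.} I would try to rule it out by a direct exchange argument. Every $f\neq e$ with $q_f>0$ has some $i$ with $\{e,f\}$ dependent in $M_i$, so $f$ is parallel to $e$ in $M_i$. Then $x_e+x_f\le1$ forces $x_f=0$, which the first case already excluded. Consequently $q\cdot x=q_ex_e=1\le\alpha$, a contradiction. After the strict inequalities $q_e>0$ and $x_e>0$ are in hand, the remaining case analysis is routine.
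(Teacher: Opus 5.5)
Your proof is correct and follows the paper's route: you delete $e$ when $q_e=0$ or $x_e=0$, and you contract $e$ and rescale the dual vector when $x_e=1$. The only difference is cosmetic. You normalize by $1-q_e$ and treat $q_e=1$ separately using the parallel-element observation. The paper instead normalizes by $\beta=\max q'(J)\le 1-q_e$ and uses that same observation (no remaining element is parallel to $e$, so the contractions are loopless) to get $\beta>0$.
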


\begin{proof}
If $q_e=0$, restricting $x$ and $q$ to $E\setminus\{e\}$ yields feasible
vectors for the deletion of $e$ and preserves the objective $\tau$.
The same is true if $x_e=0$.  Either case contradicts the minimality of
$|E|$.  Thus $q_e>0$ and $x_e>0$ for every $e$.

It remains to exclude $x_e=1$.  The element $e$ is a nonloop in every
$M_i$ and for $x':=x|_{E\setminus\{e\}}$,
the contracted vector satisfies $x'\in\bigcap_{i=1}^kP(M_i/e)$.
Indeed, for $A\subseteq E\setminus\{e\}$,
$
 x'(A)+1=x(A\cup\{e\})\le r_i(A\cup\{e\})
          =r_{M_i/e}(A)+1
$.
Let $q':=q|_{E\setminus\{e\}}$ and define
\[
 \beta:=\max\{q'(J):J\text{ is independent in every }M_i/e\}.
\]
For every such $J$, the set $J\cup\{e\}$ lies in $\F$, so
$q_e+\beta\le1$.  The smaller ground set is nonempty, since otherwise
$\tau=q_e\le1<\alpha$.  Moreover, the displayed contracted feasibility
condition and the strict
positivity of $x'$ imply that no remaining element is a loop in any
$M_i/e$; hence its singleton is common independent.  Since $q'>0$, we
obtain $\beta>0$.

Writing $R=q'\cdot x'=\tau-q_e$ and using $\alpha>1$ gives
\[
       R>\alpha-q_e>\alpha(1-q_e)\ge\alpha\beta.
\]
Thus $q'/\beta$ is anti-blocker feasible for the contracted instance and
has value greater than $\alpha$ at $x'$, contradicting minimality.
Finally, $x_e\le1$ follows from the singleton rank inequalities, so the
lemma follows.
\end{proof}

\subsection{Tight Rank Chains and Extreme-Point Density}
\label{sec:chains}

We isolate the structural fact about one independent set polytope that drives
the density estimate.

\begin{lemma}\label{lem:chain}
Let $M$ be a matroid on $E$, let $y\in P(M)$, and suppose $y_e>0$ for all
$e\in E$.  If $\mathcal T:=\{S\subseteq E:y(S)=r_M(S)\}$,
then $\dim\operatorname{span}\{\one_S:S\in\mathcal T\}\le y(E)$.
\end{lemma}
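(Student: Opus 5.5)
We will use the standard uncrossing argument for tight sets of a matroid polytope, and then read the dimension bound off a maximal chain of tight sets. The constraint function $S\mapsto r_M(S)-y(S)$ is submodular, and it is nonnegative on all of $2^E$ because $y\in P(M)$. Hence $\mathcal T$ is closed under union and intersection. Indeed, for $A,B\in\mathcal T$ submodularity gives
\[
 0=(r_M-y)(A)+(r_M-y)(B)\ge(r_M-y)(A\cup B)+(r_M-y)(A\cap B)\ge0,
\]
so both terms on the right vanish. Note also that $\emptyset\in\mathcal T$, and that $\one_\emptyset=0$ contributes nothing to the span.

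The first step is to show that $\operatorname{span}\{\one_S:S\in\mathcal T\}$ equals the span of $\{\one_S:S\in\mathcal C\}$ for some chain $\mathcal C\subseteq\mathcal T$. We take $\mathcal C$ to be a maximal chain in $\mathcal T$. Suppose some $\one_A$ with $A\in\mathcal T$ is not in the span of $\mathcal C$. We argue by the usual crossing-count or lattice argument: since $\mathcal T$ is a lattice of sets closed under $\cup,\cap$, the span of $\mathcal T$ equals the span of a maximal chain. Concretely, for consecutive chain members $C_{j-1}\subsetneq C_j$, the set $(A\cap C_j)\cup C_{j-1}$ lies in $\mathcal T$ and is sandwiched between $C_{j-1}$ and $C_j$. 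By maximality it equals one of them. Hence on each difference $C_j\setminus C_{j-1}$, the set $A$ either contains everything or nothing. The same reasoning applies above the top element, using $A\cup C_{\max}$. Thus $\one_A$ is a sum of the vectors $\one_{C_j}-\one_{C_{j-1}}$ and possibly $\one_{A\cup C_{\max}}-\one_{C_{\max}}$; the latter set is in $\mathcal T$, so by maximality it equals $C_{\max}$. So $\one_A$ lies in the span of the chain, a contradiction. This is the one place needing care, but it is routine.

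Now let $\emptyset=C_0\subsetneq C_1\subsetneq\cdots\subsetneq C_m$ be the chain with all $C_j\in\mathcal T$. The dimension in question is then at most $m$, since the vectors $\one_{C_1},\dots,\one_{C_m}$ are linearly independent. It remains to show $m\le y(E)$. Each successive difference satisfies
\[
 y(C_j\setminus C_{j-1})=r_M(C_j)-r_M(C_{j-1}),
\]
an integer. It is strictly positive, because $C_j\setminus C_{j-1}$ is nonempty and every $y_e>0$. Hence each increment is at least $1$. Summing the increments gives
\[
 m\le\sum_j y(C_j\setminus C_{j-1})=y(C_m)\le y(E).
\]
This proves the lemma.

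The main obstacle is the chain-reduction step; everything else is the telescoping integrality count above.
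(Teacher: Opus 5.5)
Your proof is correct and follows essentially the same route as the paper: uncrossing shows $\mathcal T$ is closed under union and intersection, a maximal chain is taken, the sandwich set $C_{j-1}\cup(A\cap C_j)$ shows every tight set is a union of chain blocks, and the positive integer increments $y(C_j\setminus C_{j-1})=r_M(C_j)-r_M(C_{j-1})$ bound the chain length by $y(E)$. The only cosmetic difference is that the paper takes the chain to end at $U=\bigcup_{S\in\mathcal T}S$, whereas you show $A\subseteq C_{\max}$ from $A\cup C_{\max}\in\mathcal T$ and maximality of the chain.
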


\begin{proof}
Modularity of $y(\cdot)$, submodularity of $r_M$, and feasibility of $y$
show that $\mathcal T$ is closed under union and intersection.  Take a
maximal chain in this finite lattice,
\[
 \varnothing=S_0\subsetneq S_1\subsetneq\cdots\subsetneq S_d=U:=\bigcup_{S\in\mathcal T}S,
\]
and put $B_j:=S_j\setminus S_{j-1}$.

Every $A\in\mathcal T$ is a union of blocks $B_j$.  To see this, suppose
$A\cap B_j\ne\varnothing$.  The set
\[
      S_{j-1}\cup(A\cap S_j)=(S_{j-1}\cup A)\cap S_j
\]
belongs to $\mathcal T$ and lies strictly above $S_{j-1}$.  Since
$S_{j-1}\subsetneq S_j$ is a cover in the maximal chain, this set must be
$S_j$, and hence $B_j\subseteq A$.  It follows that the $d$ chain vectors
span all tight-set vectors.  Their nonempty successive differences make
them linearly independent, so the dimension is $d$.

Finally, $y(B_j)=r_M(S_j)-r_M(S_{j-1})$ is a positive integer: integrality follows from the rank function, and
positivity from $y_e>0$ on the nonempty block $B_j$.  Therefore
\[
        d\le\sum_{j=1}^d y(B_j)=y(U)\le y(E),
\]
as claimed.
\end{proof}

Apply \autoref{lem:chain} to $x$ in each $M_i$.  By
\autoref{lem:boundary}, neither nonnegativity nor singleton upper bounds
are active at $x$.  Since $x$ is an extreme point of $P$, the normals of
all tight rank inequalities, taken over the $k$ matroids, must span
$\R^E$.  Otherwise a sufficiently small displacement in both directions
along a nonzero vector orthogonal to all active normals would remain in
$P$, contradicting extremality.  With $X:=x(E)>0$, we obtain
\begin{equation}
       |E|\le\sum_{i=1}^k
       \dim\operatorname{span}\{\one_S:x(S)=r_i(S)\}
       \le kX.
       \label{eq:density}
\end{equation}

\subsection{Closure Aggregation}
\label{sec:closure}

Let $z$ be an optimal solution of the covering LP \eqref{eq:gauge} for
the chosen $x$.  Strong duality and the definition $\tau=q\cdot x$ give
$\sum_{I\in\F}z_I=\tau$.
Complementary slackness, together with $q_e>0$ for every element, gives
the exact marginal identities
\[
                    \sum_{I\ni e}z_I=x_e\qquad \forall e\in E,
\]
and also shows that $z_I>0$ implies $q(I)=1$.

Fix $I\in\supp(z)$.  We claim that its $k$ closures cover the ground set:
\[
                         E=\bigcup_{i=1}^k\cl_{M_i}(I).
\]
If some $e$ were outside every closure, $I\cup\{e\}$ would be independent
in every $M_i$.  But $q(I)=1$ and $q_e>0$ would give
$q(I\cup\{e\})=1+q_e>1$, violating anti-blocker feasibility.

Since $I$ is independent in $M_i$,
$x(\cl_{M_i}(I))\le r_i(\cl_{M_i}(I))=r_i(I)=|I|$.
In the sum of the displayed closure-rank inequalities over $i$, every ground element is
counted at least once by the displayed closure-cover relation, while every element of
$I$ lies in all $k$ closures.  Hence
\begin{equation}
                         X+(k-1)x(I)\le k|I|.
                         \label{eq:closure-count}
\end{equation}

We can now finish the proof of \autoref{thm:main}.  Multiply
\eqref{eq:closure-count} by $z_I$ and sum over $I\in\F$.  The exact
marginal identities displayed above imply
\begin{equation}
 \sum_Iz_I|I|=\sum_ex_e=X,
 \;
 \sum_Iz_Ix(I)=\sum_ex_e^2.
 \label{eq:two-sums}
\end{equation}
Using $\sum_{I\in\F}z_I=\tau$ and \eqref{eq:two-sums}, the aggregated closure
inequality becomes
\begin{equation}
                   \tau X+(k-1)\sum_ex_e^2\le kX.
                   \label{eq:master}
\end{equation}
At this point, Cauchy--Schwarz and \eqref{eq:density} give
\[
                  \sum_{e\in E}x_e^2\ge\frac{X^2}{|E|}\ge\frac Xk.
\]
Dividing \eqref{eq:master} by $X>0$ and substituting this estimate yields
\[
 \tau\le k-(k-1)\frac{\sum_ex_e^2}{X}
      \le k-\frac{k-1}{k}
      =k-1+\frac1k=\alpha,
\]
contrary to $\tau>\alpha$. This completes the proof of \autoref{thm:main}.

\section{A Fractional Local-Ratio Algorithm}\label{sec:algorithm}

The structural argument above also gives the algorithmic guarantee in
\autoref{cor:algorithm-intro}.  For $A\subseteq E$, let
$P_A:=\bigcap_{i=1}^kP(M_i|A)$, let
$\operatorname{LP}_A(c):=\max\{c\cdot y:y\in P_A\}$, and set
$\alpha:=k-1+1/k$.

We use fractional local ratio \cite{bar-yehuda2004local}.  Rather than round a
fixed fractional point, the algorithm repeatedly solves the LP on the current
ground set, obtains a weight layer from an optimal extreme point, subtracts
the largest multiple of that layer that preserves nonnegative residual
weights, and recurses on the positive support of the residual vector.  The
standard local-ratio induction combines the guarantees for the residual
weights and the removed layer.  Thus the problem-specific step is to find a
layer that is cheap for the fractional optimum but valuable for every maximal
common independent set.  The following lemma supplies such a layer.  Its
coefficient specializes to the one used by Anegg, Angelidakis, and Zenklusen
for hypergraph matching and $b$-matching \cite{anegg2021simpler}.

\begin{lemma}\label{lem:layer}
Let $x$ be a nonzero extreme point of $P_A$, set $X=x(A)$, and define
$d_e:=k-(k-1)x_e$.  Then $d\cdot x\le\alpha X$, and $d(I)\ge X$
for every inclusionwise maximal common independent set $I\subseteq A$.
\end{lemma}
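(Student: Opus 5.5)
Both claims reuse the two estimates from Section~\ref{sec:gap-proof}: the extreme-point density bound \eqref{eq:density} for the first, and the closure-counting inequality \eqref{eq:closure-count} for the second.

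\emph{First claim.} We have $d\cdot x=kX-(k-1)\sum_{e\in A}x_e^2$. It therefore suffices to show $\sum_e x_e^2\ge X/k$, since this gives $d\cdot x\le kX-(k-1)X/k=\alpha X$.

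Let $A^+:=\{e\in A:x_e>0\}$. Cauchy--Schwarz gives $\sum_e x_e^2\ge X^2/|A^+|$, so the goal reduces to $|A^+|\le kX$. I would restrict to $A^+$: the restriction $x|_{A^+}$ is an extreme point of $\bigcap_i P(M_i|A^+)$. Indeed, any segment in that face through $x|_{A^+}$ extends by zeros to a segment in $P_A$, because $P(M_i|A^+)$ is the face of $P(M_i|A)$ cut out by $y_e=0$ for $e\notin A^+$.

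Extremality means the active constraints at $x|_{A^+}$ span $\R^{A^+}$. These are tight rank inequalities together with singleton bounds $x_e\le 1$; the latter are themselves rank inequalities with $r(\{e\})=1$. So every active normal lies in the span of the tight sets of some $M_i$. Applying \autoref{lem:chain} to each $M_i|A^+$, whose hypothesis $y_e>0$ holds on $A^+$, gives $|A^+|\le\sum_i\dim\le kX$, exactly as in \eqref{eq:density}.

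The subtle point here is whether some $x_e=1$ breaks the argument. It does not: the constraint $x_e\le1$ is already the tight rank set $\{e\}$, so it is covered by \autoref{lem:chain}, unlike the situation in \autoref{lem:boundary}.

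\emph{Second claim.} Let $I\subseteq A$ be a maximal common independent set. Maximality gives $A=\bigcup_i\cl_{M_i|A}(I)$: an element $e\notin I$ lying outside every closure would make $I\cup\{e\}$ common independent. Loops of any $M_i$ lie in every closure, so they cause no trouble.

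Feasibility of $x$ gives $x(\cl_i(I))\le r_i(I)=|I|$ for each $i$. Summing over $i$, every element of $A$ is counted at least once and every element of $I$ is counted $k$ times, since $I\subseteq\cl_i(I)$ for all $i$. Hence
\[
X+(k-1)x(I)\le k|I|,
\]
which is \eqref{eq:closure-count}. Rearranging gives $d(I)=k|I|-(k-1)x(I)\ge X$.

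I do not expect either claim to be a real obstacle. The only step that needs care is the reduction to $A^+$ in the first claim, namely that extremality passes to the restriction. Nonnegativity of $d$ is not required by the statement, though it does hold since $x_e\le1$.
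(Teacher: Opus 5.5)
Your proposal is correct and matches the paper's proof essentially step for step. You restrict to the support $H=\{e\in A:x_e>0\}$, use extremality and \autoref{lem:chain} in each matroid to get $|H|\le kX$, apply Cauchy--Schwarz for $d\cdot x\le\alpha X$, and use the closure-cover count $X+(k-1)x(I)\le k|I|$ for $d(I)\ge X$. Your justifications that extremality passes to the restriction and that $x_e=1$ is harmless (since $\{e\}$ is itself a tight rank set) are handled the same way in the paper, only more tersely.
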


\begin{proof}
Let $H=\{e\in A:x_e>0\}$.  The restriction $x|_H$ is extreme in $P_H$.
Moreover, every coordinate of $x|_H$ is positive, so none of the
nonnegativity constraints defining $P_H$ is active.  Every other defining
inequality is a matroid-rank inequality.
Extremality of $x|_H$ now implies that the tight rank normals
from the $k$ matroids span $\R^H$.  Applying \autoref{lem:chain} to $x|_H$
in each matroid gives $|H|\le kx(H)=kX$.

Consequently, Cauchy--Schwarz gives
$\sum_e x_e^2\ge X^2/|H|\ge X/k$, and hence
$d\cdot x=kX-(k-1)\sum_e x_e^2\le\alpha X$.  For a maximal $I$, the
closures $C_i=\cl_{M_i|A}(I)$ cover $A$.  Since
$x(C_i)\le r_i(C_i)=|I|$ and $I\subseteq C_i$ for every $i$, counting
closure multiplicities gives $X+(k-1)x(I)\le k|I|$, which is equivalent to
$d(I)\ge X$.
\end{proof}

Delete nonpositive-weight elements, which cannot improve either optimum, and
let $c\in\mathbb Q_{>0}^A$.  We implement fractional local ratio using the
layer supplied by \autoref{lem:layer}.

\begin{figure}[h]
\centering
\begin{algorithm}
\textul{\textsc{LocalRatio}$(A,c)$:}\+\\
if $A=\varnothing$, return $\varnothing$\\
find an extreme optimum $x$ of $\max\{c\cdot y:y\in P_A\}$\\
if $x(A)=0$, return $\varnothing$\\
for $e\in E$:\+ \\
$d_e=k-(k-1)x_e$ \-\\
$\lambda\gets\min_{e\in A}c_e/d_e$\\
$c'\gets c-\lambda d$\\
$B\gets \{e\in A:c'_e>0\}$\\
$S\gets \textsc{LocalRatio}(B,c'|_B)$\\
greedily extend $S$ to a maximal common independent set $I\subseteq A$\\
return $I$
\end{algorithm}
\caption{Rounding algorithm}
\label{alg:rounding}
\end{figure}

\begin{lemma}
\label{lem:local-ratio-guarantee}
For every $A\subseteq E$ and $c\in\mathbb Q_{>0}^A$, Algorithm \ref{alg:rounding} runs in polynomial time.  It returns an inclusionwise
maximal common independent set $I\subseteq A$ satisfying
$c(I)\ge\operatorname{LP}_A(c)/\alpha$.
\end{lemma}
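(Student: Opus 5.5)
We argue by strong induction on $|A|$, following the standard local-ratio template with the layer $d$ from \autoref{lem:layer}.

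\emph{Base cases.} If $A=\varnothing$, both sides vanish. If the extreme optimum $x$ has $x(A)=0$, then $\operatorname{LP}_A(c)=0$. In that case the returned $\varnothing$ satisfies the bound. Strictly, the statement asks for a maximal set, so we greedily extend $\varnothing$ to a maximal common independent set, which only helps since $c>0$.

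\emph{Progress of the recursion.} Otherwise $X:=x(A)>0$, and $d_e=k-(k-1)x_e\ge1>0$ because $x_e\le1$. So $\lambda=\min_e c_e/d_e>0$ is well defined and positive. The minimizing element has $c'_e=0$, so $B\subsetneq A$ and induction applies to $(B,c'|_B)$. That call returns a maximal common independent set $S\subseteq B$ with $c'(S)\ge\operatorname{LP}_B(c'|_B)/\alpha$. Independence in $M_i|B$ equals independence in $M_i|A$ for subsets of $B$, so $S$ is common independent in $A$, and the greedy extension yields a maximal $I\supseteq S$ in $A$.

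\emph{Decomposition.} Write $c=c'+\lambda d$ on $A$, where $c'\ge0$ and $c'$ vanishes off $B$.

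For the LP side, $\operatorname{LP}_A(c)=c\cdot x=c'\cdot x+\lambda\, d\cdot x$. The restriction $x|_B$ lies in $P_B$, because restriction preserves the rank inequalities. Hence $c'\cdot x=c'|_B\cdot x|_B\le\operatorname{LP}_B(c'|_B)$. By \autoref{lem:layer}, $\lambda\, d\cdot x\le\lambda\alpha X$.

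For the solution side, $c(I)=c'(I)+\lambda d(I)$. First, $c'(I)\ge c'(S)$ since $c'\ge0$. Second, $d(I)\ge X$ by \autoref{lem:layer}, because $I$ is maximal in $A$. Combining,
\[
 c(I)\ge \operatorname{LP}_B(c'|_B)/\alpha+\lambda X\ge (c'\cdot x+\lambda\, d\cdot x)/\alpha=\operatorname{LP}_A(c)/\alpha .
\]

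\emph{Running time.} The recursion depth is at most $|A|$, since each call removes at least one element. Each level needs three things:
\begin{itemize}
\item an extreme optimum of the LP over an intersection of matroid polytopes, obtainable in polynomial time by the ellipsoid method with the polynomial-time separation oracle for $P(M)$ (rank-function minimization), followed by standard conversion to a vertex;
\item the arithmetic defining $\lambda$, $c'$ and $B$;
\item a greedy extension using independence oracles.
\end{itemize}

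\emph{Main obstacle.} The main difficulty is controlling the encoding length of the weights across the recursion. Each $c'$ is built from $x$ and from $c$ via $\lambda$, so bit sizes could compound over up to $|A|$ levels. The plan is to show that the weights at depth $j$ are affine combinations $c-\sum_{t<j}\lambda_t d^{(t)}$, with each vertex $x^{(t)}$ having encoding length polynomial in $|E|$ because it is a vertex of a polytope with $0/1$ constraint matrix and integer right-hand side. We would then argue inductively that each $\lambda_t$ is a ratio of polynomially bounded rationals. If that bound becomes delicate, the first fallback is to note that polynomial-time LP solving depends only on the facet complexity, which is independent of $c$. Then only the arithmetic for $c'$ must be bounded, and the sum of $|A|$ terms each of polynomial size stays polynomial.
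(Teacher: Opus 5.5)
Your approximation and maximality argument is correct and is essentially the paper's: the split $c=c'+\lambda d$, the bound $c'\cdot x\le\operatorname{LP}_B(c'|_B)$ by restriction, and both halves of \autoref{lem:layer} applied to the greedy extension $I$. The paper states the equality $\operatorname{LP}_A(c')=\operatorname{LP}_B(c'|_B)$, but only this direction is used. One small correction concerns the case $x(A)=0$, where no change to the algorithm is needed. The returned $\varnothing$ is already maximal, because a common independent singleton $\{e\}\subseteq A$ would give $\operatorname{LP}_A(c)\ge c_e>0$. Your greedy extension there is therefore vacuous.

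The genuine gap is in the running-time claim, exactly where you flag it, and your fallback does not close it. First, optimization over $P_A$ by the ellipsoid method runs in time polynomial in the encoding length of the objective as well as in the facet complexity. Only the size of the returned vertex is independent of $c$, and you must still compute $c'$ exactly to determine $B$. Second, the claim that the subtracted terms $\lambda_t d^{(t)}$ each have polynomial size is precisely what needs proof. The quantity $\lambda_t=c^{(t)}_{e_t}/d^{(t)}_{e_t}$ depends on the accumulated residual $c^{(t)}$, and a naive induction of the form $\langle c^{(t+1)}\rangle\le 2\langle c^{(t)}\rangle+\mathrm{poly}$ allows exponential growth over $|E|$ levels.

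The missing step, which the paper compresses into ``standard determinant bounds,'' is to remove the recursion. Let $e_t$ be an element attaining the minimum at depth $t$, so $c^{(t)}_{e_t}=\lambda_t d^{(t)}_{e_t}$. Since the ground sets are nested, $e_t$ is present at every earlier depth, and therefore
\[
 c_{e_t}=\sum_{s\le t}\lambda_s d^{(s)}_{e_t}\qquad(t=0,1,2,\ldots).
\]
This is a lower-triangular system for the $\lambda_s$ with diagonal entries $d^{(t)}_{e_t}\ge1$. Its coefficients have polynomial size, because each $x^{(s)}$ is a vertex of a polytope with a $0/1$ constraint matrix and right-hand sides at most $|E|$. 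Cramer's rule and the standard size bound for rational determinants then bound every $\lambda_s$ polynomially in $|E|$ and the encoding length of $c$. The same then holds for every residual $c^{(t)}=c-\sum_{s<t}\lambda_s d^{(s)}$ on the current ground set. Equivalently, your inductive plan works if you carry a common denominator: each step multiplies it by one polynomial-size integer, so bit sizes grow additively rather than doubling. With this bound every LP call is polynomial, and the depth bound completes the running-time argument.
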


\begin{proof}
At every nontrivial call, $0\le x_e\le1$, so $1\le d_e\le k$.
Consequently, $\lambda>0$, the residual vector $c'$ is nonnegative, and at
least one of its coordinates is zero.  Thus $B\subsetneq A$ and the recursion
is well defined.

We first prove the approximation and maximality claims by induction on $|A|$.
They are immediate for $A=\varnothing$.  Let $x$ be the extreme
optimum chosen by the algorithm and put $X:=x(A)$.  If $X=0$, positivity of
$c$ implies that no singleton is common independent; hence the empty set is
maximal and has the required value.

Suppose that the recursive output is maximal in $B$, and its greedy extension
$I$ is maximal in $A$.  Since $c'$ vanishes outside $B$ and $P_A$ is
downward closed, restriction to $B$ and zero-extension back to $A$ show that
$\operatorname{LP}_A(c')=\operatorname{LP}_B(c'|_B)$.  The induction
hypothesis gives $c'(S)\ge\operatorname{LP}_A(c')/\alpha$; because
$I\supseteq S$ and $c'\ge0$, the same bound holds for $c'(I)$.

It remains to account for the removed layer $\lambda d$.  The optimality of
$x$ for $c$, together with \autoref{lem:layer}, gives
$\operatorname{LP}_A(c)=c\cdot x\le
\operatorname{LP}_A(c')+\lambda\alpha X$.  The other half of the certificate,
$d(I)\ge X$, now yields $c(I)=c'(I)+\lambda d(I)\ge
\operatorname{LP}_A(c')/\alpha+\lambda X\ge
\operatorname{LP}_A(c)/\alpha$, completing the induction.

It remains to justify polynomial time.  Since the active set shrinks at each
call, the recursion has depth at most $|E|$.  An extreme optimum can be
obtained by lexicographically optimizing over the current optimal face.  The
polytope $P_A$ has a polynomial-time separation oracle: for each matroid,
minimizing $S\mapsto r_i(S)-y(S)$ either finds a violated rank inequality or
certifies feasibility \cite{iwata2001combinatorial}; rank queries can in turn be
implemented greedily using an independence oracle.  Exact
optimization--separation equivalence supplies the required rational optima
\cite{grotschel1981ellipsoid,schrijver2003combinatorial}, and standard
determinant bounds keep all residual weights at polynomial encoding length.
Together with the depth bound, this gives a deterministic weakly polynomial
oracle-model algorithm.
\end{proof}

For an arbitrary rational weight vector $w$, take
$A:=\{e\in E:w_e>0\}$ and $c:=w|_A$.  Downward closedness gives
$\operatorname{LP}_E(w)=\operatorname{LP}_A(c)$, so
\autoref{cor:algorithm-intro} follows from \autoref{lem:local-ratio-guarantee}.

\section{Extension to \texorpdfstring{$p$}{p}-Matchoids}\label{sec:matchoids}

We now show that the proof depends only on how many matroid constraints
contain any one element.  Let
\(
  \mathcal L=\{M_j=(E_j,\ind_j):j\in[m]\}
\)
be a collection of matroids on possibly overlapping subsets of
\(E:=\bigcup_{j=1}^mE_j\).  It defines the independence system
\begin{equation}
 \F_{\mathcal L}:=
 \{I\subseteq E:I\cap E_j\in\ind_j\text{ for every }j\in[m]\}.
 \label{eq:matchoid-family}
\end{equation}
If every element of $E$ belongs to at most $p$ of the local ground sets
$E_j$, then \eqref{eq:matchoid-family} is a \emph{$p$-matchoid}
\cite{huang2023matchoid}.  Its natural fractional and integral polytopes are
\[
 P_{\mathcal L}:=
 \{x\in[0,1]^E:x|_{E_j}\in P(M_j)\; \forall j\in[m]\},
 Q_{\mathcal L}:=\conv\{\one_I:I\in\F_{\mathcal L}\}.
\]
The intersection of $p$ matroids on $E$ is the special case $m=p$ and
$E_j=E$ for all $j$.  General rank-$p$ hypergraph $b$-matching is another
special case: the elements are the hyperedges, and every hypergraph vertex
$v$ supplies the uniform matroid of rank $b_v$ on its incident hyperedges.
Each hyperedge then occurs in at most $p$ local matroids.
Write $\alpha_p:=p-1+1/p$.

Lee, Sviridenko, and Vondr\'ak phrased their conjecture using a reduction of
a $p$-uniform matchoid to matroid $p$-parity rather than using
$P_{\mathcal L}$ directly \cite[Sections~1 and~4]{lee2013matching}.  To
describe their formulation in our notation, observe first that an element
which is a loop of a local matroid has value zero in both formulations and
may be deleted.  Next pad the representation by adding a free rank-one
matroid on $\{e\}$ for each missing occurrence of an element $e$.  Thus every
remaining element occurs in exactly $p$ local ground sets, without changing
either $\F_{\mathcal L}$ or $P_{\mathcal L}$.  Let
\[
 J(e):=\{j:e\in E_j\},\;
 V_j:=\{(j,e):e\in E_j\},\;
 V:=\mathbin{\dot\bigcup}_{j=1}^m V_j,
\]
and let $\widehat M_j$ be the copy of $M_j$ on $V_j$.  On the occurrence
ground set $V$, form the direct-sum matroid
$\widehat M:=\bigoplus_{j=1}^m\widehat M_j$.  Each original element $e$ is
represented by the $p$-set
\[
                         H_e:=\{(j,e):j\in J(e)\}.
\]
The sets $H_e$ are pairwise disjoint, and $I\subseteq E$ belongs to
$\F_{\mathcal L}$ exactly when $\bigcup_{e\in I}H_e$ is independent in
$\widehat M$.

Their LP relaxation (4.1)--(4.4) has an item variable $y_e$ for every
$H_e$ and an occurrence variable $\xi_{j,e}$ for every $(j,e)\in V$:
\begin{equation}
\label{eq:lsv-matchoid-lp}
\begin{aligned}
 \max\quad & \sum_{e\in E}w_ey_e \\
 \text{s.t.}\quad
 & \xi(S)\le r_{\widehat M}(S)
     && \forall S\subseteq V,\\
 & \xi_{j,e}=y_e
     && \forall (j,e)\in V,\\
 & \xi_{j,e}\ge0
     && \forall (j,e)\in V,\\
 & y_e\ge0
     && \forall e\in E.
\end{aligned}
\end{equation}
Their conjecture states that the integrality gap of this
LP is exactly $\alpha_p=p-1+1/p$ for the maximum-weight $p$-matchoid problem and
exactly $p-1$ for maximum-weight $p$-matroid intersection
\cite[Conjecture~1]{lee2013matching}.
Although \eqref{eq:lsv-matchoid-lp} is not written as
our intersection of local independent-set polytopes, 
the two relaxations are the same after projection.

\begin{lemma}\label{lem:matchoid-lp-equivalence}
The projection of the feasible region of \eqref{eq:lsv-matchoid-lp} onto
the $y$-coordinates is exactly $P_{\mathcal L}$.  The integral feasible
solutions correspond as well.  Consequently, for every weight vector $w$,
the fractional and integral optimum values in the two formulations agree,
and the formulations have the same integrality gap.
\end{lemma}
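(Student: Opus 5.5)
The plan is to use the standard fact that the independent set polytope of a direct sum is the product of the summands' polytopes, and then read off the projection directly.

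\emph{Step 1: the polytope of the direct sum.} Since $\widehat M=\bigoplus_j\widehat M_j$, we have $r_{\widehat M}(S)=\sum_j r_{\widehat M_j}(S\cap V_j)$. Hence a nonnegative $\xi\in\R^V$ satisfies all constraints $\xi(S)\le r_{\widehat M}(S)$ if and only if $\xi|_{V_j}\in P(\widehat M_j)$ for every $j$. For the nontrivial direction, the constraint for $S$ is the sum over $j$ of the constraints for $S\cap V_j$. For the other direction, take $S\subseteq V_j$.

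\emph{Step 2: the projection.} Fix $(y,\xi)$ feasible for \eqref{eq:lsv-matchoid-lp}. The coupling constraints force $\xi_{j,e}=y_e$, so $\xi|_{V_j}$ is the copy of $y|_{E_j}$. Step~1 then gives $y|_{E_j}\in P(M_j)$ for every $j$, and $y\ge0$. The box constraint $y\le1$ in $P_{\mathcal L}$ follows from the singleton rank inequalities: after padding, every remaining element lies in some $E_j$, and $r_j(\{e\})\le1$. Conversely, given $y\in P_{\mathcal L}$, define $\xi_{j,e}:=y_e$. Step~1 shows this pair is feasible. One point needs care: the padding and loop deletion must not change $P_{\mathcal L}$. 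A padded free rank-one matroid on $\{e\}$ contributes only $y_e\le1$, which is already imposed. A local loop forces $y_e=0$ in both formulations, so deleting it changes neither projection up to zero-extension.

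\emph{Step 3: integral points and values.} An integral feasible point has $y\in\{0,1\}^E$. Then $y=\one_I$, and $\xi$ is the incidence vector of $\bigcup_{e\in I}H_e$. Rank feasibility of this $0/1$ vector is equivalent to independence in $\widehat M$, which by the stated equivalence is equivalent to $I\in\F_{\mathcal L}$. The converse holds in the same way. Since the objective depends only on $y$, both the fractional and the integral optima coincide, and therefore so do the integrality gaps.

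I expect no real obstacle. The only delicate points are the bookkeeping for padding and loop deletion, and stating the direct-sum rank decomposition correctly.
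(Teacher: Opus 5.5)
Your proposal is correct and follows essentially the same route as the paper. Both arguments use the direct-sum rank decomposition and the linking equalities to move between $\xi$ and $y$; your Step 1 is the paper's inline computation with $\widehat A_j$ and $A_j(S)$. Both then identify integral points through independence of $\bigcup_{e\in I}H_e$ in $\widehat M$. Your remarks on padding and loop deletion only spell out what the paper states just before the lemma.
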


\begin{proof}
The rank function of the direct sum satisfies
\begin{equation}
 r_{\widehat M}(S)
 =\sum_{j=1}^m r_j\bigl(\{e\in E_j:(j,e)\in S\}\bigr)
 \qquad(S\subseteq V).
 \label{eq:direct-sum-rank}
\end{equation}
Suppose first that $(\xi,y)$ is feasible for
\eqref{eq:lsv-matchoid-lp}.  For $j\in[m]$ and $A\subseteq E_j$, apply its
rank inequality to $\widehat A_j:=\{(j,e):e\in A\}$.  The linking equalities
give
\[
 y(A)=\xi(\widehat A_j)
 \le r_{\widehat M}(\widehat A_j)=r_j(A).
\]
Hence $y|_{E_j}\in P(M_j)$ for every $j$.  The singleton rank inequalities
also give $y_e\le1$ (and give $y_e=0$ if $e$ is a loop of a local matroid),
so $y\in P_{\mathcal L}$.

Conversely, let $y\in P_{\mathcal L}$ and define
$\xi_{j,e}:=y_e$.  For any $S\subseteq V$, put
$A_j(S):=\{e\in E_j:(j,e)\in S\}$.  Feasibility in every local matroid
polytope and \eqref{eq:direct-sum-rank} give
\[
 \xi(S)=\sum_{j=1}^m y(A_j(S))
 \le\sum_{j=1}^m r_j(A_j(S))
 =r_{\widehat M}(S).
\]
Thus $(\xi,y)$ is feasible for \eqref{eq:lsv-matchoid-lp}, proving the
projection claim.  Finally, when $y$ is integral, the linking equalities
make the selected occurrence set equal to $\bigcup_{e:y_e=1}H_e$; it is
independent in $\widehat M$ exactly when
$\{e:y_e=1\}\in\F_{\mathcal L}$.  The objective is $w\cdot y$ in both
formulations, which proves the remaining assertions.
\end{proof}

By \autoref{lem:matchoid-lp-equivalence}, an upper bound for
$P_{\mathcal L}$ is an upper bound for the LP in the conjecture.  The
rank-one special case is the standard LP for $p$-uniform hypergraph matching,
whose gap is exactly $\alpha_p$ \cite{furedi1993fractional,chan2012hypergraph}.
Together with this lower bound, the next proposition resolves the
$p$-matchoid part of \cite[Conjecture~1]{lee2013matching}.  The projective-plane instances below
witness equality explicitly whenever a plane of order $p-1$ exists.

\begin{proposition}\label{prop:matchoid-containment}
Every $p$-matchoid satisfies
$P_{\mathcal L}\subseteq\alpha_pQ_{\mathcal L}$.
\end{proposition}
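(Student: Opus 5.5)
We follow the proof of \autoref{thm:main} verbatim, replacing the $k$ global matroids by the local matroids $M_1,\ldots,M_m$ and the count $k$ by the occurrence bound $p$. Suppose for contradiction that $\gamma_{Q_{\mathcal L}}(x)>\alpha_p$ for some $x\in P_{\mathcal L}$, and take such an instance with $|E|$ minimum. First delete local loops. Then use the padding described above, so that every element lies in exactly $p$ local ground sets; this changes neither $\F_{\mathcal L}$ nor $P_{\mathcal L}$. The gauge formula \eqref{eq:gauge} and its dual \eqref{eq:antiblocker} hold unchanged because $\F_{\mathcal L}$ is downward closed. As before, we choose a globally maximizing pair $(x,q)\in P_{\mathcal L}\times\abl(Q_{\mathcal L})$ with $x$ extreme, and set $\tau=q\cdot x>\alpha_p$.

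\textbf{Step 1 (strict coordinates).} We reprove \autoref{lem:boundary}. The cases $q_e=0$ and $x_e=0$ are again handled by deletion, where $e$ is removed from every local ground set containing it. For $x_e=1$, we contract $e$ in each local matroid $M_j$ with $j\in J(e)$ and leave the other local matroids alone. The rank computation $x'(A)+1\le r_j(A\cup\{e\})=r_{M_j/e}(A)+1$ is applied only for $j\in J(e)$, and for these $j$ we have $A\subseteq E_j\setminus\{e\}$. The key fact is that $J\cup\{e\}\in\F_{\mathcal L}$ whenever $J$ is independent in the contracted matchoid. This holds because on each $E_j$ with $j\in J(e)$, contraction is exactly the right notion, while the other $E_j$ do not contain $e$. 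The scaling argument with $\beta$ then goes through unchanged. Contraction may leave an element in fewer than $p$ local sets; this is harmless, since we may pad again, or simply note that only the upper bound of $p$ occurrences is used.

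\textbf{Step 2 (density).} Apply \autoref{lem:chain} to $x|_{E_j}$ in each $M_j$, where every coordinate is positive. This gives $\dim\operatorname{span}\{\one_S:S\subseteq E_j\text{ tight}\}\le x(E_j)$. Extremality forces these tight normals, for $j\in[m]$, to span $\R^E$, so
\[
|E|\le\sum_{j=1}^m x(E_j)=\sum_e |J(e)|\,x_e\le pX .
\]
This is the step where the number of local matroids $m$ is replaced by the occurrence count $p$, and it is the main point to check. The bound is a sum over local ground sets weighted by occurrences, not $m\cdot X$, so the estimate is exactly as strong as \eqref{eq:density} with $k$ replaced by $p$.

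\textbf{Step 3 (closure aggregation).} Fix $I\in\supp(z)$. Anti-blocker tightness together with $q>0$ shows that $I$ is maximal in $\F_{\mathcal L}$. Hence each $e\notin I$ lies in $\cl_{M_j}(I\cap E_j)$ for some $j\in J(e)$. Summing $x(\cl_{M_j}(I\cap E_j))\le |I\cap E_j|$ over $j$ gives the following accounting. The right-hand side is $\sum_j|I\cap E_j|=p|I|$. On the left, each $e\in I$ is counted $p$ times (it lies in all $p$ of its closures), and each other element at least once. Therefore $X+(p-1)x(I)\le p|I|$, which is \eqref{eq:closure-count} with $p$ in place of $k$.

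\textbf{Step 4 (conclusion).} Aggregating with $z$ and using the marginal identities gives $\tau X+(p-1)\sum_e x_e^2\le pX$. Cauchy--Schwarz together with Step 2 gives $\sum_e x_e^2\ge X/p$, hence $\tau\le\alpha_p$, a contradiction.

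The main obstacle is the bookkeeping of occurrences in Steps 2 and 3, and padding is what makes it clean. The contraction in Step 1 is routine once it is performed locally, only on the matroids in $J(e)$.
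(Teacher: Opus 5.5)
Your proposal is correct and follows the paper's proof essentially step for step: pad to exact frequency $p$, rerun the boundary reduction with deletion and contraction performed in every local matroid containing $e$, sum \autoref{lem:chain} over the local matroids to get $|E|\le\sum_j x(E_j)=pX$, and aggregate the local closure inequalities $X+(p-1)x(I)\le p|I|$ before applying Cauchy--Schwarz. The only blemish is the aside at the end of Step 1. It is unnecessary, since contracting $e$ removes no other element from any local ground set, and your Step 3 as written does use the exact count $p$ rather than only the upper bound.
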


\begin{proof}
The proof follows \autoref{sec:gap-proof}; we record only the changes caused
by the local ground sets.  First pad the representation by adding copies of
the free rank-one matroid on $\{e\}$ until every element $e$ occurs in exactly
$p$ local ground sets.  This changes neither $\F_{\mathcal L}$ nor
$P_{\mathcal L}$.

Suppose the asserted containment fails.  Choose a minimal counterexample and
an extreme primal--dual witness $(x,q)$ exactly as in
\autoref{sec:minimal}, and write $X:=x(E)$ and
$\tau:=q\cdot x>\alpha_p$.  The boundary reduction is unchanged, except that
deletion removes an element from every local ground set containing it and
contraction contracts it in every such local matroid.  It again gives
$q_e>0$ and $0<x_e<1$ for every $e$.

The density calculation in \autoref{sec:chains} now sums over the local
matroids.  Extend each tight rank normal by zero outside its local ground set.
Extremality, \autoref{lem:chain}, and the exact frequency $p$ give
\begin{equation}
 |E|\le\sum_{j=1}^m
 \dim\operatorname{span}\{\one_S:S\subseteq E_j,\ x(S)=r_j(S)\}
 \le\sum_{j=1}^m x(E_j)=pX.
 \label{eq:matchoid-density}
\end{equation}

For the closure calculation, take the matchoid analogue of the covering-LP
solution $z$ from \autoref{sec:closure}.  Complementary slackness gives the
same marginal identities.  For $I\in\supp(z)$, the local closures
$C_j:=\cl_{M_j}(I\cap E_j)$ cover $E$: otherwise an uncovered element could
be added to $I$, contradicting $q(I)=1$ and $q_e>0$.  Each element of $I$
lies in exactly $p$ of these closures, so the local rank inequalities give
\begin{equation}
 X+(p-1)x(I)\le\sum_{j=1}^m x(C_j)
 \le\sum_{j=1}^m|I\cap E_j|=p|I|.
 \label{eq:matchoid-closure-count}
\end{equation}

The aggregation in \eqref{eq:two-sums}--\eqref{eq:master} is now verbatim,
with $p$ in place of $k$: it gives
$\tau X+(p-1)\sum_e x_e^2\le pX$.  By
\eqref{eq:matchoid-density}, Cauchy--Schwarz gives
$\sum_e x_e^2\ge X/p$, and hence $\tau\le\alpha_p$, a contradiction.
\end{proof}

The proposition immediately implies \autoref{thm:matchoid}.

\begin{lemma}\label{lem:matchoid-layer}
Let $A\subseteq E$, let $x$ be a nonzero extreme point of the natural
$p$-matchoid LP restricted to $A$, and set $X:=x(A)$ and
$d_e:=p-(p-1)x_e$.  Then $d\cdot x\le\alpha_pX$, and $d(I)\ge X$ for every
inclusionwise maximal matchoid-feasible set $I\subseteq A$.
\end{lemma}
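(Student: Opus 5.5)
The plan is to imitate the proof of \autoref{lem:layer}, replacing the $k$ global matroids by the local matroids and using the frequency bound $p$. First pad the representation with free rank-one matroids on singletons, exactly as in the proof of \autoref{prop:matchoid-containment}. After padding, every element of $A$ lies in exactly $p$ local ground sets, and neither the feasible family nor the LP changes. Each padded constraint $x_e\le1$ is then itself a rank inequality of a local matroid.

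Let $H:=\{e\in A:x_e>0\}$. The restriction $x|_H$ is an extreme point of the matchoid LP on $H$, whose local matroids are $M_j|(E_j\cap H)$. No nonnegativity constraint is active on $H$, so every active constraint is a local rank inequality. This includes the bounds $x_e\le1$, which are now local rank constraints of the padding matroids. Extremality therefore forces the tight local rank normals, extended by zero outside their local ground sets, to span $\R^H$. Applying \autoref{lem:chain} to $x|_{E_j\cap H}$ in each local matroid, we get
\[
|H|\le\sum_j\dim\operatorname{span}\{\one_S:S\subseteq E_j\cap H,\ x(S)=r_j(S)\}\le\sum_j x(E_j\cap H)=pX,
\]
where the last equality uses the exact frequency $p$; this is \eqref{eq:matchoid-density} on $H$. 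Cauchy--Schwarz then gives $\sum_e x_e^2\ge X^2/|H|\ge X/p$. Hence
\[
d\cdot x=pX-(p-1)\sum_e x_e^2\le pX-(p-1)X/p=\alpha_pX.
\]

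For the second claim, let $I\subseteq A$ be maximal matchoid-feasible, and set $C_j:=\cl_{M_j|(E_j\cap A)}(I\cap E_j)$. These sets cover $A$. Indeed, if $e\in A$ lay outside every $C_j$ with $j\in J(e)$, then $(I\cap E_j)+e$ would be independent for each $j\ni e$. For $j\notin J(e)$ the local sets are unchanged, so $I+e$ would be feasible, contradicting maximality. Each element of $I$ lies in exactly $p$ of the sets $C_j$. Summing the inequalities $x(C_j)\le r_j(C_j)=|I\cap E_j|$ gives
\[
X+(p-1)x(I)\le\sum_j x(C_j)\le\sum_j|I\cap E_j|=p|I|,
\]
which is \eqref{eq:matchoid-closure-count}. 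Rearranged, this says $p|I|-(p-1)x(I)\ge X$, that is, $d(I)\ge X$.

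The only delicate point is the extreme-point spanning step. Without padding, the constraints $x_e\le1$ of $P_{\mathcal L}$ are not local rank constraints, and \autoref{lem:chain} does not account for them. Padding resolves this. Alternatively, one can check that a tight bound $x_e=1$ is also tight for the local singleton rank inequality $x(\{e\})\le r_j(\{e\})=1$ in any $M_j$ containing $e$, because loops were deleted; such a tight set is already counted by \autoref{lem:chain}.
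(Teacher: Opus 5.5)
Your proposal is correct and follows the paper's proof essentially step for step. You pad to exact frequency $p$, restrict to $H=\supp(x)$, and use extremality plus \autoref{lem:chain} in each local matroid to get $|H|\le pX$ and hence $d\cdot x\le\alpha_pX$ via Cauchy--Schwarz; you then cover $A$ by the closures of $I\cap E_j$ in the restricted local matroids to obtain $X+(p-1)x(I)\le p|I|$. Your extra remarks (that the bounds $x_e\le1$ coincide with local singleton rank inequalities for nonloops, and why maximality forces the closures to cover $A$) only make explicit what the paper leaves implicit.
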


\begin{proof}
The proof is the same as that of \autoref{lem:layer}, with local matroids in
place of the $k$ global matroids.  Pad the representation as above so that
every element occurs in exactly $p$ local ground sets.  For
$H:=\supp(x)$, the same extremality argument as in \autoref{lem:layer},
followed by \autoref{lem:chain} and the frequency count, gives
$|H|\le\sum_jx(H\cap E_j)=pX$.  Thus Cauchy--Schwarz gives
$\sum_e x_e^2\ge X/p$, and the calculation from \autoref{lem:layer} yields
$d\cdot x\le\alpha_pX$.

The other change is to replace the $k$ global closures by the closures in
the restricted local matroids.  For a maximal matchoid-feasible set $I$,
these closures cover $A$, and the exact frequency count gives
$X+(p-1)x(I)\le p|I|$, which is equivalent to $d(I)\ge X$.
\end{proof}

By \autoref{lem:matchoid-layer}, the recursive algorithm and its analysis from
\autoref{sec:algorithm} apply with $p$ in place of $k$.  Separation over the
natural matchoid LP amounts to separation over each local matroid
independent-set polytope, so the same running-time argument proves
\autoref{cor:matchoid-algorithm}.

For a hyperedge $e$ of size $p$, the certificate
$d_e=p-(p-1)x_e$ is exactly the coefficient appearing in the hypergraph
matching and $b$-matching analysis of Anegg, Angelidakis, and Zenklusen
\cite{anegg2021simpler}.  The factor is attained whenever a finite
projective plane of order $p-1$ exists, in particular whenever $p-1$ is a
prime power.  The projective-plane construction below is the classical tight
example for the standard $p$-uniform hypergraph-matching LP: F\"uredi
\cite{furedi1981maximum} gives the underlying extremal result, and F\"uredi,
Kahn, and Seymour \cite{furedi1993fractional} give the weighted extension;
Chan and Lau \cite{chan2012hypergraph} explicitly record the LP example.
We include its translation into the $p$-matchoid formulation for
completeness.

\begin{proposition}\label{prop:matchoid-tight}
If a finite projective plane of order $p-1$ exists, then the natural
$p$-matchoid LP has integrality gap $p-1+1/p$.
\end{proposition}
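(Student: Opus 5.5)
Because \autoref{prop:matchoid-containment} already gives the upper bound $p-1+1/p$, we only need a matching lower bound, and we will exhibit one instance attaining it. Take a projective plane of order $n:=p-1$. It has $N:=n^2+n+1$ points and $N$ lines, every line has $n+1=p$ points, every point lies on $p$ lines, and any two distinct lines meet in exactly one point. Let the ground set $E$ be the set of lines. For each point $v$, let the local ground set $E_v$ consist of the $p$ lines through $v$, and put the uniform matroid of rank one on $E_v$. Each line lies in exactly $p$ of the sets $E_v$, so this is a $p$-matchoid, namely the rank-one hypergraph special case mentioned before \autoref{lem:matchoid-lp-equivalence}. Use unit weights $w=\one$.

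We first bound the integral optimum. A set $I$ lies in $\F_{\mathcal L}$ exactly when no point lies on two lines of $I$. Any two distinct lines share a point, so $|I|\le1$. Since $\F_{\mathcal L}$ contains every singleton, the integral optimum equals $1$.

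Next we compute a fractional value. Let $x_e:=1/p$ for every line $e$. In $M_v$ the rank of $S\subseteq E_v$ is $\min\{|S|,1\}$, so we must check $x(S)\le1$ for nonempty $S\subseteq E_v$. This holds because $x(E_v)=p\cdot(1/p)=1$. So $x\in P_{\mathcal L}$, and
\[
 w\cdot x=\frac{N}{p}=\frac{(p-1)^2+(p-1)+1}{p}=\frac{p^2-p+1}{p}=p-1+\frac1p .
\]
The LP optimum is therefore at least $\alpha_p$, while the integral optimum is $1$. Hence the integrality gap is at least $\alpha_p$. Together with the upper bound from \autoref{prop:matchoid-containment}, the gap equals $\alpha_p$.

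No step here is a real obstacle. The one point to handle correctly is the identification of $\F_{\mathcal L}$ with intersecting-free line families. This uses that distinct lines always meet, so every pair of lines is excluded by some single local rank-one matroid. By \autoref{lem:matchoid-lp-equivalence}, the same gap holds for the formulation of Lee, Sviridenko, and Vondr\'ak. The construction also covers the case $p=2$: the plane of order $1$ is a triangle, giving the gap $3/2$ for ordinary graph matching.
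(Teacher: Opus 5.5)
Your proof is correct and is essentially the paper's own argument: lines of the plane as elements, a rank-one uniform matroid on the lines through each point, unit weights, the point $x_e=1/p$ of value $(p^2-p+1)/p=p-1+1/p$ against an integral optimum of $1$, and the matchoid upper bound to get equality. Your closing remark about $p=2$ counts the degenerate triangle as a plane of order $1$, which standard definitions exclude. The counting argument does go through for it, though, and it is only a side comment.
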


\begin{proof}
Let the elements be the lines of a projective plane of order $p-1$.  The
plane has $p^2-p+1$ points and the same number of lines, every point lies on
$p$ lines, every line contains $p$ points, and any two distinct lines meet.
For each point $v$, introduce the rank-one uniform matroid on the set of
lines containing $v$.  Each element belongs to exactly $p$ local matroids.

A feasible set contains at most one line, since any two lines meet, so the
integral optimum under unit weights is $1$.  On the other hand, assigning
$x_e=1/p$ to every line is LP feasible: each local rank-one constraint has
left-hand side $p(1/p)=1$.  Its value is
\[
                    \frac{p^2-p+1}{p}=p-1+\frac1p.
\]
Together with \autoref{thm:matchoid}, this proves the claim.
\end{proof}

The projective-plane examples make the density bound, the closure count,
and Cauchy--Schwarz simultaneously tight.  Consequently, an improvement to
$k-1$ for intersections of $k$ global matroids must use structure beyond
the fact that each element participates in $k$ matroid constraints and therefore requires new techniques.

\bibliographystyle{plain}
\bibliography{ref}

\end{document}